\newtheorem{theorem}{Theorem}[section]
\newtheorem{lemma}[theorem]{Lemma}
\newtheorem{problem}[theorem]{Problem}
\newtheorem{corollary}[theorem]{Corollary}
\theoremstyle{remark}
\newtheorem{remark}[theorem]{Remark}
\theoremstyle{definition}
\newcommand{\C}{\Bbb C}
\def\BC{\mathbb C}
\def\BT{\mathbb T}
\newcommand{\Z}{\Bbb Z}
\newcommand{\D}{\Delta}
\newcommand{\tr}{{\mathrm{tr}\,}}
\newcommand{\la}{\langle}
\newcommand{\ra}{\rangle}
\newcommand{\p}{\partial}
\newcommand{\SL}{\mathrm{SL}(2,\C)}
\numberwithin{equation}{section}
\begin{document}

\title{Twisted Alexander polynomials of torus links}

\author{Teruaki Kitano, Takayuki Morifuji and Anh T. Tran}

\begin{abstract}
In this paper we give an explicit formula for the twisted Alexander polynomial of any torus link 
and show that it is a locally constant function on the $\SL$-character variety. 
We also discuss similar things for the higher dimensional twisted Alexander polynomial 
and the Reidemeister torsion. 
\end{abstract}

\thanks{2010 {\it Mathematics Subject Classification}.
Primary 57M27, Secondary 57M25.}

\thanks{{\it Key words and phrases.\/} character variety, torus link, twisted Alexander polynomial.
}

\address{Department of Information Systems Science, 
Faculty of Engineering, 
Soka University, 
Tangi-cho 1-236, 
Hachioji, Tokyo 192-8577, Japan}
\email{kitano@soka.ac.jp}

\address{Department of Mathematics,
Hiyoshi Campus, Keio University, 
Yokohama 223-8521, Japan}
\email{morifuji@z8.keio.jp}

\address{Department of Mathematical Sciences, The University of Texas at Dallas, 
Richardson, TX 75080, USA}
\email{att140830@utdallas.edu}

\maketitle

\section{Introduction}\label{section:1}

The twisted Alexander polynomial is a generalization of the classical Alexander polynomial of a knot 
in the $3$-sphere $S^3$. 
It was first introduced by Lin \cite{Lin01-1} for  knot groups and 
by Wada \cite{Wada94-1} for finitely presentable groups which include link groups. 
Recently 
this polynomial invariant has been widely investigated by lots of authors and recognized as a powerful tool in 
low dimensional topology. As for recent development of this topic and related references, see 
the survey papers \cite{FV10-1}, \cite{Kitano15-1} and \cite{Morifuji15-1}. 

In this paper we consider twisted Alexander polynomials associated to 
irreducible $\SL$-representations of groups of knots or links in $S^3$. 
In particular we investigate the behavior of the twisted Alexander polynomial as a function on 
the $\SL$-character variety, that is, the set of conjugacy classes of irreducible 
$\SL$-representations. 
As for this kind of property, it is known that there is a hyperbolic knot such that the twisted 
Alexander polynomial varies continuously on its character variety (see \cite{GM03-1}). 
On the other hand, it is known that such kind of phenomenon never happen for any torus knot. 
Actually in our previous paper \cite{KtM12-1} we showed that 
every coefficient of the twisted Alexander polynomial of a torus knot is a locally constant 
function of the $\SL$-character variety. 
Furthermore 
we reproved a result of Johnson that the Reidemeister torsion of a 
torus knot is locally constant on the $\SL$-character variety (see \cite{Johnson}). 

The purpose of this paper is to generalize the above result to torus links in $S^3$ and 
moreover to discuss similar things for higher dimensional twisted Alexander polynomials. 
To this end we give an explicit formula for the (higher dimensional) twisted Alexander polynomial, 
which depends only on the eigenvalues of the $2\times2$ matrices corresponding to the cores of the 
two solid tori of the standard genus one Heegaard splitting of $S^3$. 
The property of the twisted Alexander polynomial and the Reidemeister torsion mentioned above 
immediately follows from the formula. 

This paper is organized as follows. In the next section, we quickly review the definition of 
the twisted Alexander polynomial of a link in $S^3$. 
In Section~\ref{section:3} we study the $\SL$-character variety of 
a torus link. 
In Section~\ref{section:4} we describe a formula for the 
twisted Alexander polynomial of a torus link. In the last section, 
we investigate the higher dimensional twisted Alexander polynomial and 
the Reidemeister torsion.

\section{Twisted Alexander polynomials}\label{section:2}

Let 
$L=L_1\sqcup\cdots\sqcup L_\mu$ be a $\mu$-component oriented link in $S^3$ 
and $E_L=S^3\setminus \mathrm{int}(N(L))$ the exterior of $L$ in $S^3$. 
Here $N(L)$ is a closed tubular neighborhood of $L$. 
We denote $\pi_1(E_L)$ by $G(L)$ and call it the link group. 
We choose and fix a Wirtinger presentation of $G(L)$: 
$$
G(L)
=
\la
x_1,\ldots,x_l\,|\,r_1,\ldots,r_{l-1}
\ra
$$
where every generator corresponds to an arc in a regular projection $D(L)$ of the link $L$ 
and every relator comes from a crossing in $D(L)$. 
The abelianization homomorphism 
$$
\alpha_L:G(L)\to H_1(E_L;\Z)\cong\Z^{\oplus\mu}
=
\la t_1\ra\oplus\cdots\oplus\la t_\mu \ra
$$
is given by assigning to each generator $x_i$ the meridian element 
$t_k\in H_1(E_L;\Z)$ of the corresponding component $L_k$ of $L$. 
Here we denote the sum in each $\Z$ multiplicatively.

In this paper 
we consider a representation of $G(L)$ into the two-dimensional special linear group 
$\SL$, say $\rho:G(L)\to \SL$. 
The maps $\rho$ and $\alpha_L$ naturally induce two ring
homomorphisms $\tilde{\rho}: {\Z}[G(L)] \rightarrow M(2,{\C})$ and
$\tilde{\alpha}_L:{\Z}[G(L)]\rightarrow {\Z}[t_1^{\pm1},\ldots,t_\mu^{\pm1}]$, 
where
${\Z}[G(L)]$ is the group ring of $G(L)$ and $M(2,{\C})$ is the
matrix algebra of degree $2$ over ${\C}$. Then
$\tilde{\rho}\otimes\tilde{\alpha}_L$ defines a ring homomorphism
${\Z}[G(L)]\to M\left(2,{\C}[t_1^{\pm1},\ldots,t_\mu^{\pm1}]\right)$. 
Let $F_l$ denote the
free group on generators $x_1,\ldots,x_l$ and
$$
\Phi:{\Z}[F_l]\to M\left(2,{\C}[t_1^{\pm1},\ldots,t_\mu^{\pm1}]
\right)
$$
the composition of the surjection
$\tilde{\phi}:{\Z}[F_l]\to{\Z}[G(L)]$
induced by the presentation of $G(L)$
and the map
$\tilde{\rho}\otimes\tilde{\alpha}_L:{\Z}[G(L)]\to M(2,{\C}[t_1^{\pm1},\ldots,t_\mu^{\pm1}])$.

Let us consider the $(l-1)\times l$ matrix $A$
whose $(i,j)$-entry is the $2\times 2$ matrix
$$
\Phi\left(\frac{\partial r_i}{\partial x_j}\right)
\in M\left(2,{\C}[t_1^{\pm1},\ldots,t_\mu^{\pm1}]\right),
$$
where
$\frac{\partial}{\partial x}:\Z [F_l]\to\Z [F_l]$
denotes the free differential. 
We call $A$ the \textit{Alexander matrix}\, of the link group $G(L)$ associated to $\rho$. 
For
$1\leq j\leq l$,
let us denote by $A_j$
the $(l-1)\times(l-1)$ matrix obtained from $A$
by removing the $j$th column.
We regard $A_j$ as
a $2(l-1)\times 2(l-1)$ matrix with coefficients in
${\C}[t_1^{\pm1},\ldots,t_\mu^{\pm1}]$. 
Then Wada's \textit{twisted Alexander polynomial}\/
\cite{Wada94-1} of a link $L$
associated to a representation $\rho:G(L)\to \SL$ is defined
to be the rational function
$$
\D_{L,\rho}(t_1,\ldots,t_\mu)
=\frac{\det A_j}{\det\Phi(x_j-1)}
$$
and well-defined up to multiplication by
$t_1^{2k_1}\cdots t_\mu^{2k_\mu}~(k_j\in{\Z})$. 
In particular, it does not depend on a choice of a presentation of $G(L)$. 

\begin{remark}\label{rmk:trivial-rep}
By definition, 
$\D_{L,\rho}(t_1,\ldots,t_\mu)$ is a rational function in the variables $t_1,\ldots,t_\mu$, 
but it will be a Laurent polynomial if $L$ is a link with two or more components~\cite[Proposition~9]{Wada94-1}, or 
$L$ is a knot $K$ and $\rho$ is non-abelian~\cite[Theorem~3.1]{KtM05-1}. 
We note that if $\rho,\rho':G(L)\to \SL$ are conjugate representations, 
then $\D_{L,\rho}(t_1,\ldots,t_\mu) = \D_{L,\rho'}(t_1,\ldots,t_\mu)$ 
holds (see \cite[Section~3]{Wada94-1}).
\end{remark}

\section{Character varieties of torus links}\label{section:3}

In this section, we discuss the $\SL$-character variety of the group of a torus link. 
To this end, we first review a presentation of the group of a torus link. 

\subsection{A presentation of the group of a torus link}\label{subsection:3.1}

Let $L=T(\mu p,\mu q)$ be a $\mu$-component torus link in $S^3$ where $p,q$ are coprime integers. 
The link group 
$G(L)=\pi_1(E_L)$ has the following presentation (see \cite[Lemma~2.2]{RZ87-1}): 
$$G(L) = 
\la m_1,\cdots,m_\mu, x, y,\ell 
\mid m_\mu m_{\mu-1}\cdots m_1=x^r y^s, [\ell, m_i] =1~(1\le i \le \mu) ,\ell=x^p = y^q\ra 
$$
where 
$x,y$ represent the cores of the two solid tori of the Heegaard splitting defined by 
the torus $T\subset S^3$ (namely $T$ determines the Heegaard decomposition of genus one in $S^3$), 
$\ell$ represents a parallel of the torus knot $T(p,q)$ on $T$, 
$m_i$ is a meridian of each component of $L$ and $r,s$ satisfy $ps+qr=1$. 
We note that $G(L)$ contains $G(p,q)=\la x,y\,|\,x^p=y^q\ra$, 
the group of the torus knot $T(p,q)$, as a subgroup. 

\begin{remark}\label{remark:baridge-number}
\begin{itemize}
\item[(1)]
The center of $G(L)$ is an infinite cyclic group $\Z$ generated by $x^p=y^q$. 

\item[(2)]
The abelianization homomorphism 
$\alpha_L:G(L)\rightarrow \la t_1\ra\oplus\cdots\oplus\la t_\mu\ra$ 
is given by 
\[
\alpha_L(m_i)=t_i~(1\le i \le \mu),~ \alpha_L(x)=t^q,~ \alpha_L(y)=t^p
\]
where we put $t=t_1\cdots t_\mu$. 
\item[(3)]
It is known that the bridge number of the torus link $T(\mu p,\mu q)$ is equal to $\min(\mu p,\mu q)$ 
(see \cite[Corollary~1.5]{RZ87-1}). 
\end{itemize}
\end{remark}

Using the relation $m_\mu m_{\mu-1}\cdots m_1=x^r y^s$, 
the above presentation of $G(L)$ can be reduced to 
$$
G(L) = 
\la m_1, \cdots,m_{\mu-1},x, y \mid [x^p,m_i] =1~(1\le i \le \mu-1), x^p = y^q \ra .
$$
The latter presentation will be useful for calculating the (higher dimensional) 
twisted Alexander polynomial in Sections~\ref{section:4} and \ref{section:higher}. 

\subsection{Character varieties of torus links}\label{subsection:2.2}
In this subsection we describe the character variety of $G(L)$ following the paper \cite{GM93-1}. 

Now let us consider an \textit{irreducible}\, representation $\rho:G(L)\to\SL$. 
Namely there is no nontrivial proper invariant subspace of $\C^2$ under the natural 
action of $\rho(G(L))$. A representation $\rho:G(L)\to\SL$ is called 
\textit{reducible}\, if it is not irreducible. 

Let $\mathcal{R}(\mu p,\mu q)$ be the set of irreducible $\SL$-representations 
of the link group $G(L)$ of a $\mu$-component torus link $L=T(\mu p,\mu q)$. 
We denote the quotient space by conjugation action of $\SL$ by 
$$
\mathcal{X}(\mu p,\mu q)
=\mathcal{R}(\mu p,\mu q)/\sim.
$$ 

\begin{remark}\label{rmk:character-variety}
We call $\mathcal{R}(\mu p,\mu q)$ the \textit{irreducible representation variety} of 
$L=T(\mu p,\mu q)$ and $\mathcal{X}(\mu p,\mu q)$ the \textit{irreducible character variety} 
of $L$. Actually, for a \textit{character} $\chi_\rho:G(L)\to \C$ defined by 
$\chi_\rho(\gamma)=\tr\rho(\gamma),\,\gamma\in G(L)$, 
it is known that  $\chi_\rho=\chi_{\rho'}\,(\rho,\rho'\in\mathcal{R}(\mu p,\mu q))$ if and only if 
$\rho,\rho'$ are conjugate (see \cite[Proposition~1.5.2]{CS83-1}).
\end{remark}

We also denote the $2\times 2$ identity matrix by $I$  
and the image of each generator of $G(L)$ by its capital letter. 

\begin{lemma}\label{lem:center}
If $\rho$ is irreducible, then $X^p=Y^q=\pm I$ hold. 
\end{lemma}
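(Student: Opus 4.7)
The plan is to exploit the fact, recorded in Remark~\ref{remark:baridge-number}(1), that $x^p = y^q$ generates the center of $G(L)$, and then apply Schur's lemma. Since $x^p$ is central, the matrix $X^p = \rho(x^p)$ commutes with $\rho(\gamma)$ for every $\gamma \in G(L)$; equivalently, $X^p$ lies in the commutant of the image of $\rho$ on $\C^2$.

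Because $\rho$ is irreducible and $\C$ is algebraically closed, Schur's lemma forces any such commuting matrix to be a scalar: concretely, pick any eigenvalue $\lambda$ of $X^p$; then the $\lambda$-eigenspace $E_\lambda \subset \C^2$ is preserved by every $\rho(\gamma)$ (since $\rho(\gamma) X^p = X^p \rho(\gamma)$), so by irreducibility $E_\lambda = \C^2$ and $X^p = \lambda I$. The condition $X^p \in \SL$ gives $\det(\lambda I) = \lambda^2 = 1$, so $\lambda = \pm 1$. The identity $x^p = y^q$ in $G(L)$ then yields $Y^q = X^p = \pm I$.

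The main obstacle is essentially just the correct invocation of Schur's lemma, so there is little to grind through; the only point requiring some care is the verification that $x^p$ really is central, but this is already supplied by Remark~\ref{remark:baridge-number}(1), which itself follows from the presentation via $[\ell,m_i] = 1$ and $\ell = x^p = y^q$ together with $m_\mu\cdots m_1 = x^r y^s$.
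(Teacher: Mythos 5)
Your argument is correct and is essentially the paper's own proof: both rest on the centrality of $x^p=y^q$, so that an eigenspace of $X^p$ is invariant under the whole image of $\rho$, forcing $X^p$ to be scalar and hence $\pm I$ by the determinant condition. The paper merely phrases this as a contradiction with irreducibility rather than as an explicit appeal to Schur's lemma.
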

\begin{proof}
Assume that $X^p=Y^q\neq \pm I$. 
Since 
$X^p=Y^q$ is in the center of $\rho(G(L))$, 
$X^p=Y^q$ commutes with any one of $X,Y$ and $M_j~(j=1,\ldots,\mu-1)$.  
Thus an eigenvector of $X^p=Y^q$ is also an eigenvector of any of them. 
It contradicts the irreducibility of $\rho$. 
\end{proof}
  
By the above lemma, we can write the eigenvalues of $X$ and $Y$ 
as 
$\alpha(a)=\exp(\sqrt{-1}\pi a/p),~$
$0\leq a\leq p$ and 
$\beta(b)=\exp(\sqrt{-1}\pi b/q),~0\leq b\leq q$ respectively. 

\begin{remark}
Since $X^p=(-I)^a=Y^q=(-I)^b$, it holds that $a\equiv b~(\mathrm{mod}~ 2)$. 
\end{remark}

For $\rho\in \mathcal{R}(\mu p,\mu q)$, 
it will be defined by a set of matrices 
$(X,Y,M_1,\ldots,M_{\mu-1})$ which satisfy the relations coming from the presentation of $G(L)$. 
We note that the relations $[x^p,m_i]=1~(1\leq i\leq \mu-1)$ give no restriction to the matrices 
$(X,Y,M_1,\ldots,M_{\mu-1})$, because $X^p=Y^q=\pm I$ hold by Lemma~\ref{lem:center}. 

\medskip

\noindent
{\bf Notation.} We will use the following notations: 
$$
t_x=\tr(X), t_y=\tr(Y),t_i=\tr(M_i),
t_{xy}=\tr(XY),t_{xi}=\tr(XM_i), t_{xyi} = \tr(XYM_i)
$$
for $1 \le i \le \mu -1$.

\subsubsection{$\mu=2$}

We observe how a representative of an irreducible representation 
\[
\rho:G(L) = 
\la m_1, x, y 
\mid [x^p, m_1] =1,x^p = y^q\ra 
\rightarrow \SL
\]
in each conjugacy class can be determined by using traces. 
Here we fix two non negative integers $a,b~(0\leq a\leq p,~0\leq b\leq q,~a\equiv b\text{ mod }2)$ and then fix 
\[
\begin{split}
t_x&=\tr(X)=\alpha(a)+\alpha(a)^{-1}=2\cos(a\pi/p),\\
t_y&=\tr(Y)=\beta(b)+\beta(b)^{-1}=2\cos(b\pi/q).
\end{split}
\]

\noindent
{\bf Case 1.} $\rho|_{G(p,q)}$ is \textit{non-abelian}. 
By \cite[Proposition~4.3]{GM93-1}, up to conjugation, we can put  
$$
X=
\begin{pmatrix}
\alpha(a)&0\\
0&\alpha(a)^{-1}
\end{pmatrix},~t_x\not=\pm 2;~
Y=
\begin{pmatrix}
s&1\\
u&v
\end{pmatrix},~
u=sv-1,~ t_y\neq\pm 2.
$$
By $\alpha(a)^2-t_x\alpha(a)+1=0$, 
one sees that
\[
\alpha(a)=
\frac{t_x + \sqrt{t_x^2-4}}{2} \quad 
\text{ or } \quad 
\alpha(a)=
\frac{t_x -\sqrt{t_x^2-4}}{2}.
\]
Since $\begin{pmatrix}
\alpha(a)&0\\
0&\alpha(a)^{-1}
\end{pmatrix}$ 
is conjugate to 
$\begin{pmatrix}
\alpha(a)^{-1}&0\\
0&\alpha(a)
\end{pmatrix}$ 
by conjugation action of 
$\begin{pmatrix}
0&1\\
-1&0
\end{pmatrix}$, 
the matrix $X$ is perfectly determined by $t_x$ up to conjugation. 

Next we determine $Y$ by using $t_y, t_{xy}$. Since $s+v=t_y$, we have
$v=t_y-s$. 
To determine $s$, 
we use 
\[
\begin{split}
t_{xy}
&=\alpha(a) s+\alpha(a)^{-1}v\\
&=s(\alpha(a)-\alpha(a)^{-1})+\alpha(a)^{-1}t_y. 
\end{split}
\]
Thus we have 
\[
\begin{split}
s&=\frac{t_{x y}-\alpha(a)^{-1}t_y}{\alpha(a)-\alpha(a)^{-1}},\\
v&=t_y-s\\
&=-\frac{t_{x y}-\alpha(a) t_y}{\alpha(a)-\alpha(a)^{-1}},\\
u&=s v-1 \\
  &= - \frac{t_{xy}^2- t_{xy} t_x t_y + t_y^2 + t_x^2-4}{t_x^2-4}.
\end{split}
\]
Therefore $t_x,t_y,t_{xy}$ determine $X$ and $Y$ uniquely, up to conjugation. 

\begin{remark}
Note that $\alpha(a)\not=\alpha(a)^{-1}$ since $t_x\neq\pm 2$. Moreover, $u \not=0$ if and only if $t_{xy}^2- t_{xy} t_x t_y + t_y^2 + t_x^2-4 \not= 0$. 
\end{remark}


{\bf Case 1.1.} $u\neq 0$. This is equivalent to $t_{xy}^2- t_{xy} t_x t_y + t_y^2 + t_x^2-4  \not= 0$.  In this case $\rho|_{G(p,q)}$ is \textit{irreducible}. 

We 
now determine $M_1$ 
by using $t_1,t_{x1},t_{y1},t_{xy1}$. 
Put 
$M_1=
\begin{pmatrix}
\gamma&\delta\\
\varepsilon&\zeta
\end{pmatrix}
$ with $\gamma\zeta-\delta\varepsilon=1$ and $\gamma+\zeta=t_1$. 
Also we have
\[
\begin{split}
t_{x1}&=\tr(XM_1)\\
&=\alpha(a)\gamma+\alpha(a)^{-1}\zeta\\
&=\alpha(a)\gamma+\alpha(a)^{-1}(t_1-\gamma)\\
&=(\alpha(a)-\alpha(a)^{-1})\gamma+\alpha(a)^{-1}t_1,
\end{split}
\]
namely 
\[
\gamma=
\frac{t_{x1}-\alpha(a)^{-1}t_1}{\alpha(a)-\alpha(a)^{-1}}
\]
and 
\[
\begin{split}
\zeta
&=t_1-\gamma\\
&=-\frac{t_{x1}-\alpha(a)t_1}{\alpha(a)-\alpha(a)^{-1}}.
\end{split}
\]
Further we have 
\[
\begin{split}
t_{y1}
&=\tr(YM_1)\\
&=s\gamma+\varepsilon+u\delta+v\zeta\\
&=s\gamma+\varepsilon+u\delta+v(t_1-\gamma)\\
&=(s-v)\gamma+u\delta+\varepsilon+v t_1,
\end{split}
\]
then $u\delta+\varepsilon=t_{y1}-v t_1-(s-v)\gamma$ holds. 

On the other hand, 
$\gamma\zeta-\delta\varepsilon=1$ implies $(u\delta)\varepsilon=u(\gamma\zeta-1)$. 
Hence 
$u\delta$ and $\varepsilon$ are the solutions $\theta_1,\theta_2$ of 
the quadratic equation
$$
Z^2-(t_{y1}-v t_1-(s-v)\gamma)Z+u(\gamma\zeta-1)=0
$$
whose coefficients are functions of $t_x,t_y,t_1,t_{xy},t_{x1}$ and $t_{y1}$. 
For any values of $t_x,t_y,t_1,t_{xy},t_{x1}$ and $t_{y1}$, 
there exists at most two solutions $\theta_1,\theta_2$. 
Therefore these six coordinates determine two triples 
$(X,Y,M_1)$ and $(X,Y,M_1')$, that is, 
\[
\begin{split}
X& =
\begin{pmatrix}
\alpha(a)&0\\
0&\alpha(a)^{-1}
\end{pmatrix},~\alpha(a)\not=\pm1;~\\
Y &=
\begin{pmatrix}
s&1\\
u&v
\end{pmatrix},~
u=sv-1\not=0;~\\
M_1 &=
\begin{pmatrix}
\gamma&\theta_1/u\\
\theta_2&\zeta
\end{pmatrix}~
\mathrm{or}~
\begin{pmatrix}
\gamma&\theta_2/u\\
\theta_1&\zeta
\end{pmatrix}.
\end{split}
\]
These two triples yield the two possible values of the coordinate $t_{xy1}=\tr(XYM_1)$: 
$$
t_{xy1}=
\alpha(a) s\gamma+\alpha(a)^{\pm 1}\theta_1+\alpha(a)^{\mp 1}\theta_2+\alpha(a)^{-1} v\zeta. 
$$
In other words, if we also fix the value of $t_{xy1}$, the triple $(X,Y,M_1)$ is 
uniquely determined. Note that, by writing in terms of traces,  $t_{xy1}$ satisfies the following quadratic equation
\begin{eqnarray}
&& t^2_{xy1} - (t_x t_{y1} + t_y t_{x1} + t_1 t_{xy} - t_x t_y t_1) t_{xy1} \label{quad 1}\\
&& + \,  t_x^2+t_y^2 + t_1^2 + t_{xy}^2 + t_{x1}^2 + t_{y1}^2 + t_{xy} t_{x1} t_{y1} - t_x t_y t_{xy} - t_x t_1 t_{x1} - t_y t_1 t_{y1} - 4 =0. \nonumber
\end{eqnarray}
See \cite[Section~5]{GM93-1}. 

Combining all that we have obtained at this point, we have
$$
X=
\begin{pmatrix}
\alpha(a)&0\\
0&\alpha(a)^{-1}
\end{pmatrix},\quad
Y=
\begin{pmatrix}
s&1\\
u& t_y - s
\end{pmatrix},
$$
$$
\alpha(a)^{\pm1}=
\frac{t_x\pm\sqrt{t_x^2-4}}{2},\quad
s=\frac{t_{x y}-\alpha(a)^{-1}t_y}{\alpha(a)-\alpha(a)^{-1}}, \quad
u= - \frac{t_{xy}^2- t_{xy} t_x t_y + t_y^2 + t_x^2-4}{t_x^2-4},
$$
$$
M_1=
\begin{pmatrix}
\gamma&\delta\\
\varepsilon&t_1-\gamma
\end{pmatrix},\quad
\gamma=
\frac{t_{x1}-\alpha(a)^{-1}t_1}{\alpha(a)-\alpha(a)^{-1}}
$$
and
$$
\varepsilon(u\delta)=u(\gamma t_1-\gamma^2-1),\quad
\varepsilon+u\delta
=
t_{y1}-(t_y-s)t_1-\gamma(2s-t_y).
$$
Fixing the value of $t_{xy1}$, the matrix $M_1$ and hence 
the triple $(X,Y,M_1)$ is perfectly determined. 

As a conclusion, in Case 1.1, 
a representation $\rho:G(L)\to \SL$, i.e. 
the triple $(X,Y,M_1)$ of matrices is well determined by the coordinates 
\[
\big(
t_x=2\cos(\pi a/p),t_y=2\cos(\pi b/q),
t_1,
t_{xy},
t_{x1},
t_{y1},
t_{xy1} 
\big)
\]
where $t_{xy}^2- t_{xy} t_x t_y + t_y^2 + t_x^2-4 \not= 0$ and $(t_1,
t_{xy},
t_{x1},
t_{y1},
t_{xy1})$ satisfies equation \eqref{quad 1}. Here $0 <  a < p$, $0 < b < q$, and $a\equiv b\text{ mod }2$. 

So there are 
$\frac{(p-1)(q-1)}{2}$ components, 
which are labeled by $t_x,t_y$. 
Further 
each of them has 
the complex dimension $4$ parametrized by $t_1,t_{xy},t_{x1},t_{y1}, t_{xy1}$ which satisfy equation \eqref{quad 1}. 

Note the followings:
\begin{itemize}
\item 
Any point with $t_{xy}^2- t_{xy} t_x t_y + t_y^2 + t_x^2-4 =0$ (i.e. $t_{xy}=\alpha(a)\beta(b)^{\pm 1}+\alpha(a)^{-1}\beta(b)^{\mp 1}$) belongs to one of the spaces in Case 1.2 below.
\item
$t_{xy1}$ can have only two values after fixing other coordinates $t_x,t_y,t_1,t_{xy},t_{x1}$ and $t_{y1}$. 
\end{itemize}

\smallskip
{\bf Case 1.2.} $u=0$. This is equivalent to $t_{xy}^2- t_{xy} t_x t_y + t_y^2 + t_x^2-4 =0$. 
In this case $\rho|_{G(p,q)}$ is \textit{reducible}. 
Up to conjugation, we may assume that 
$$
X=
\begin{pmatrix}
\alpha(a)&0\\
0&\alpha(a)^{-1}
\end{pmatrix}\quad\mathrm{and}\quad
Y=
\begin{pmatrix}
\beta(b)^{\pm 1}&1\\
0&\beta(b)^{\mp 1}
\end{pmatrix}.
$$
The matrices $X$ and $Y$ are determined by $t_x,t_y$ and $t_{xy}=\alpha(a)\beta(b)^{\pm 1}+\alpha(a)^{-1}\beta(b)^{\mp 1}$.  Moreover, if we fix $t_x$ and $t_y$, there are 2 choices for $t_{xy}$.

For $M_1=\begin{pmatrix}
\gamma&\delta\\
\varepsilon&\zeta
\end{pmatrix}$, 
we can see 
\[
\begin{split}
\gamma
&=
\frac{t_{x1}-\alpha(a)^{-1}t_1}{\alpha(a)-\alpha(a)^{-1}}, \\
\zeta
&=t_1-\gamma\\
&=-\frac{t_{x1}-\alpha(a)t_1}{\alpha(a)-\alpha(a)^{-1}}
\end{split}
\]
as in Case 1.1. 
Since  
\[
\begin{split}
t_{y1}
&=\tr(YM_1)\\
&=\beta(b)^{\pm 1}\gamma+\varepsilon+\beta(b)^{\mp 1}\zeta\\
\end{split}
\]
we can see 
\[
\begin{split}
\varepsilon
&=t_{y1}-\beta(b)^{\pm 1}\gamma-\beta(b)^{\mp 1}\zeta\\
&=\frac{(\alpha(a)-\alpha(a)^{-1})t_{y1}-\beta(b)^{\pm 1}(t_{x1}-\alpha(a)^{-1}t_1)-\beta(b)^{\mp 1}(-t_{x1}+\alpha(a)t_1)}{\alpha(a)-\alpha(a)^{-1}}\\
&=\frac{(\alpha(a)-\alpha(a)^{-1})t_{y1}
-(\beta(b)^{\pm 1}-\beta(b)^{\mp 1})t_{x1}
-(\alpha(a)\beta(b)^{\mp 1}-\alpha(a)^{-1}\beta(b)^{\pm 1})t_1}{\alpha(a)-\alpha(a)^{-1}}.
\end{split}
\]

In this case, the representation $\rho:G(L)\to\SL$ is irreducible if and only if $\varepsilon\neq 0$. 
This open condition is 
\begin{equation} \label{irred 1.2}
(\alpha(a)-\alpha(a)^{-1})t_{y1}-(\beta(b)^{\pm 1}-\beta(b)^{\mp 1})t_{x1}-(\alpha(a)\beta(b)^{\mp 1}-\alpha(a)^{-1}\beta(b)^{\pm 1})t_1\neq 0. 
\end{equation}
Hence there are $\frac{(p-1)(q-1)}{2} \times 2 = (p-1)(q-1)$ components, and each of them has the complex dimension 
$3$ parametrized by $t_{1},t_{x1},t_{y1}$ which satisfy condition \eqref{irred 1.2}.  

\smallskip
\noindent
{\bf Case 2.} $\rho|_{G(p,q)}$ is \textit{abelian}. In this case, 
$(X,Y)$ is conjugate in $\SL$ to a pair of diagonal matrices, 
so it is enough to consider the following cases. 

\smallskip
{\bf Case 2.1.} $\rho|_{G(p,q)} \not= \{\pm I\}$.  Then $X \not= \pm I$ or $Y \not= \pm I$. Without loss of generality, we assume $X \not= \pm I$. 
As in Case 1.2, we have 
\[
\begin{split}
\gamma
&=
\frac{t_{x1}-\alpha(a)^{-1}t_1}{\alpha(a)-\alpha(a)^{-1}}, \\
\zeta
&=-\frac{t_{x1}-\alpha(a)t_1}{\alpha(a)-\alpha(a)^{-1}},\\
t_{y1}
&=\beta(b)^{\pm 1}\gamma+\beta(b)^{\mp 1}\zeta\\
&=\frac{(\beta(b)^{\pm 1}-\beta(b)^{\mp 1})t_{x1}+(\beta(b)^{\mp 1}\alpha(a)-\beta(b)^{\pm 1}\alpha(a)^{-1}))t_1}{\alpha(a)-\alpha(a)^{-1}}.
\end{split}
\]
We then see that 
the representation $\rho$ is irreducible if and only if   $\delta\neq 0$ and $\varepsilon\neq 0$. These conditions  are equivalent to $t_1 \not= \pm 2$ and $\gamma\zeta\neq 1$.  
Here 
\[
\gamma\zeta=-\frac{t_{x1}^2-t_{x1} t_x t_1+t_1^2}{(\alpha(a)-\alpha(a)^{-1})^2}
\]
and then the conditions are 
\begin{equation} \label{irred 2.1}
t_1 \not= \pm 2 \quad \text{and} \quad t_{x1}^2-t_{x1} t_xt_1+t_1^2+t_x^2 -4 \neq 0.
\end{equation}
In this case each component has the dimension $2$ parametrized by $t_1$ and $t_{x1}$ 
which satisfy condition \eqref{irred 2.1}.


\smallskip
{\bf Case 2.2.} $\rho|_{G(p,q)} = \{\pm I\}$. 
In this case $\rho$ is an abelian representation. This is a contradiction.

\subsubsection{$\mu\geq 3$} In this case we are able to explicitly describe the irreducible representations in the following cases. The other cases remain unknown. 

\smallskip

{\bf Case 1.1.} $\rho|_{G(p,q)}$ is \textit{irreducible}. In this case, similar to the previous subsection, 
the representation $\rho:G(L)\to \SL$, i.e. 
the $(\mu+1)$-tuple $(X,Y,M_1,\ldots,M_{\mu-1})$ is 
 well determined by the coordinates 
$$
\big(t_x=2\cos(\pi a/p),t_y=2\cos(\pi b/q),  t_{xy}, t_i, t_{xi},t_{yi}, t_{xyi}\big)_{1 \le i \le \mu -1}
$$
where $t_{xy}^2 - t_x t_y t_{xy} + t_x^2 + t_y^2 -4 \not= 0$ and each quadruple $(t_i, t_{xi},t_{yi}, t_{xyi})$ satisfies the following quadratic equation 
\begin{eqnarray}
&& t^2_{xyi} - (t_x t_{yi} + t_y t_{xi} + t_i t_{xy} - t_x t_y t_i) t_{xyi} \label{quad}\\
&& + \,  t_x^2+t_y^2 + t_i^2 + t_{xy}^2 + t_{xi}^2 + t_{yi}^2 + t_{xy} t_{xi} t_{yi} - t_x t_y t_{xy} - t_x t_i t_{xi} - t_y t_i t_{yi} - 4 =0. \nonumber
\end{eqnarray} 
The coordinates $t_x$ and $t_y$ are determined by $a~(0<a<p)$ and $b~(0<b<q)$ which satisfy 
$a\equiv b~(\mathrm{mod}~2)$. So there are $\frac{(p-1)(q-1)}{2}$ components, 
which are labeled by $t_x,t_y$. 
Further 
each of them has 
the complex dimension $1 + 4 (\mu-1) - (\mu-1) = 3\mu-2$ parametrized by $t_i,t_{xy},t_{xi},t_{yi}, t_{xyi}$, which  satisfy equation \eqref{quad}. 

\smallskip
{\bf Case 2.2.}  $\rho|_{G(p,q)} = \{\pm I\}$.  
In this case $\mathcal{X}(\mu p,\mu q)$ 
can be identified with the irreducible character variety of a free group 
$F_{\mu-1}$. Hence the dimension of each component 
is given by $3(\mu-1)-3=3\mu-6$. See \cite[Section~5]{GM93-1} for details. 

\begin{problem}
Describe $\mathcal{X}(\mu p,\mu q)$ when $\rho|_{G(p,q)}$ is reducible and nontrivial. 
\end{problem}

\section{Twisted Alexander polynomials of torus links}\label{section:4}

In this section, we give an explicit formula for the twisted Alexander polynomial of any torus link 
associated to an irreducible $\SL$-representation. 

Let $\rho\in\mathcal{R}(\mu p,\mu q)$, namely 
$\rho:G(L)\rightarrow \SL$ is an irreducible representation. 
By Lemma~\ref{lem:center}, 
all eigenvalues of $X$ and $Y$ are roots of unity and we may assume that  
$X$ is conjugate to 
$\begin{pmatrix}
\alpha & 0\\
0 & \alpha^{-1}
\end{pmatrix}$ 
and $Y$ is to 
$
\begin{pmatrix}
\beta & 0\\
0 & \beta^{-1}
\end{pmatrix}
$ where 
$\alpha=\alpha(a)=\exp(\sqrt{-1}\pi a/p),\,0\leq a\leq p$ and 
$\beta=\beta(b)=\exp(\sqrt{-1}\pi b/q),\,0\leq b\leq q$.

Recall that $G(L)$ has the presentation 
\[
G(L) = 
\la m_1, \ldots,m_{\mu-1},x, y \mid [x^p,m_i] ~(1\le i \le \mu-1), x^p = y^q \ra .
\]
We put relators $r_i=x^p m_i x^{-p} m_i^{-1}~(1\leq i\leq \mu-1)$ and $r_\mu = x^{p}y^{-q}$. 
Applying the free differential to each $r_i\ (1\leq i\leq \mu-1)$, 
we have
\[
\begin{split}
\frac{\p r_i}{\p m_j}
&=
\begin{cases}
x^p-r_i\ (i=j)\\
0\ (i\neq j),
\end{cases}\\
\frac{\p r_i}{\p x}
&=1+x+\cdots+x^{p-1}-x^pm_ix^{-p} (1+x+\cdots +x^{p-1}) ,\quad
\frac{\p r_i}{\p y}=0.
\end{split}
\]
Moreover 
\[
\frac{\p r_\mu}{\p m_i} =0, ~
\frac{\p r_\mu}{\p x} = 1+x+\cdots +x^{p-1},~
\frac{\p r_\mu}{\p y} = -x^p(1+y+\cdots +y^{q-1}).
\]
We consider the square matrix $A_{\mu+1}$ which is obtained from the Alexander matrix $A$ by 
removing the $(\mu+1)$th column. By the definition of the twisted Alexander polynomial, 
we have 
\[
\begin{split}
&\Delta_{L,\rho}(t_1,\ldots,t_\mu)
=\frac{\det A_{\mu+1}}{\det \Phi(y-1)}\\
&=\frac{
\big( \det (t^{pq} X^p - I) \big)^{\mu-1} \det (I+t^q X+\cdots+t^{q(p-1)}X^{p-1})
}
{\det(t^p Y-I)
}\\
&=\frac{
(t^{2pq}-(\alpha^p+\alpha^{-p})t^{pq}+1)^{\mu-1}
(1+\alpha t^q+\cdots\alpha^{p-1}t^{q(p-1)})(1+\alpha^{-1} t^q+\cdots\alpha^{-(p-1)}t^{q(p-1)})
}
{t^{2p}-(\beta+\beta^{-1})t^p+1
}\\
&=\frac{\big(t^{pq}-(-1)^a\big)^{2\mu}
}
{(t^p-\beta)(t^p-\beta^{-1})(t^q-\alpha)(t^q-\alpha^{-1})
},
\end{split}
\]
where $\alpha^p=(-1)^a$ and we put $t=t_1\cdots t_\mu$. 

It is clear that $\Delta_{L,\rho}(t_1,\ldots,t_\mu)$ depends only on the eigenvalues of $X$ and $Y$, 
which does not vary continuously. Therefore every coefficient of 
$\Delta_{L,\rho}(t_1,\ldots,t_\mu)$ is locally constant. 

To sum up we have the following. 

\begin{theorem}\label{theorem:main}
The twisted Alexander polynomial of the torus link $L=T(\mu p,\mu q)$ is given by 
$$
\Delta_{L,\rho}(t_1,\ldots,t_\mu)
=
\frac{\big(t^{pq}-(-1)^a\big)^{2\mu}
}
{(t^p-\beta)(t^p-\beta^{-1})(t^q-\alpha)(t^q-\alpha^{-1})
},
$$
where $t=t_1\cdots t_\mu$. 
Moreover every coefficient of $\Delta_{L,\rho}(t_1,\ldots,t_\mu)$ is 
locally constant on the irreducible character variety $\mathcal{X}(\mu p,\mu q)$. 
\end{theorem}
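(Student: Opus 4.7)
The plan is to work directly from Wada's definition, using the reduced presentation of $G(L)$ given in Section~\ref{subsection:3.1}:
$$G(L) = \langle m_1, \ldots, m_{\mu-1}, x, y \mid r_1, \ldots, r_{\mu-1}, r_\mu \rangle,$$
where $r_i = x^p m_i x^{-p} m_i^{-1}$ for $1 \le i \le \mu-1$ and $r_\mu = x^p y^{-q}$. By Lemma~\ref{lem:center}, for any irreducible $\rho$ I may conjugate so that $X = \mathrm{diag}(\alpha, \alpha^{-1})$ and $Y = \mathrm{diag}(\beta, \beta^{-1})$ with $\alpha^p = (-1)^a$ and $\beta^q = (-1)^b$, where $a \equiv b \pmod 2$.

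First I would compute the Fox derivatives and assemble the $\mu \times (\mu+1)$ block Alexander matrix, then remove the $y$-column to obtain the square matrix $A_{\mu+1}$. The conceptual heart of the calculation is the observation that because $X^p = \pm I$ is central, $\rho(r_i) = X^p M_i X^{-p} M_i^{-1} = I$ for $i < \mu$, and $\alpha_L(r_i) = 1$ trivially, so $\Phi(r_i) = I$. This forces $\Phi(\partial r_i/\partial m_i) = \Phi(x^p) - \Phi(r_i) = t^{pq} X^p - I$ and $\Phi(\partial r_i/\partial m_j) = 0$ for $j \ne i$, while $\partial r_\mu/\partial m_j = 0$ identically. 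Hence $A_{\mu+1}$ is block upper triangular with diagonal blocks $(t^{pq} X^p - I), \ldots, (t^{pq} X^p - I)$ (repeated $\mu-1$ times) and $\Phi(\partial r_\mu/\partial x)$, and the $\Phi(\partial r_i/\partial x)$ entries appearing in the $x$-column do not affect the determinant.

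The remaining evaluations are mechanical. Since $X^p = (-1)^a I$, one gets $\det(t^{pq}X^p - I) = (t^{pq} - (-1)^a)^2$. The last diagonal block $\Phi(\partial r_\mu/\partial x) = I + t^q X + \cdots + t^{q(p-1)} X^{p-1}$ is diagonal because $X$ is, and each of its diagonal entries telescopes as a geometric series $\bigl((-1)^a t^{pq} - 1\bigr)/(t^q \alpha^{\pm 1} - 1)$; using $(t^q \alpha - 1)(t^q\alpha^{-1} - 1) = (t^q - \alpha)(t^q - \alpha^{-1})$ yields its determinant. The denominator $\det \Phi(y-1) = \det(t^p Y - I) = (t^p - \beta)(t^p - \beta^{-1})$ by the same identity. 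Combining everything produces the stated formula.

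For the local-constancy statement, the formula depends on $\rho$ only through the roots of unity $\alpha, \beta$ and the sign $(-1)^a$, which are determined by the discrete invariants $t_x = \alpha + \alpha^{-1}$ and $t_y = \beta + \beta^{-1}$. The analysis in Section~\ref{section:3} shows that $t_x, t_y$ are constant on each connected component of $\mathcal{X}(\mu p, \mu q)$, so every coefficient of $\Delta_{L,\rho}$ is locally constant. The step I expect to require the most care is establishing the block triangular structure, specifically confirming that $\Phi(r_i) = I$ so that $\Phi(\partial r_i/\partial m_i)$ collapses to a scalar matrix; once that is in hand the rest of the argument reduces to evaluating elementary determinants.
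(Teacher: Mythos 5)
Your proposal is correct and follows essentially the same route as the paper: the same reduced presentation and relators, the same Fox derivatives, the same block-triangular evaluation of $\det A_{\mu+1}$ after deleting the $y$-column, the same telescoping of the geometric series $I+t^qX+\cdots+t^{q(p-1)}X^{p-1}$, and the same observation that local constancy follows because the formula depends only on the discrete data $\alpha$, $\beta$, $(-1)^a$. One small imprecision: for an irreducible $\rho$ the matrices $X$ and $Y$ cannot in general be \emph{simultaneously} diagonalized (in Case 1.1 they do not commute), but this is harmless because every quantity in your computation is a determinant depending only on the eigenvalues of $X$ and of $Y$ separately, which is exactly how the paper phrases the reduction.
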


\begin{remark}
If $\mu=1$, that is, $L$ is the torus knot $T(p,q)$, 
then the above formula gives the same one as in \cite[Section~4]{KtM12-1} 
and \cite[Theorem~5.16]{Morifuji15-1}.
\end{remark}

\section{Higher dimensional twisted Alexander polynomials}\label{section:higher}

In this section 
we investigate the higher dimensional twisted Alexander polynomial associated to 
an irreducible representation of $\SL$. 

\subsection{Irreducible representations of $\SL$}\label{subsection:5.1}
The group $\SL$ acts naturally on the vector space $\BC^2$. Then the symmetric product $\text{Sym}^{n-1}(\BC^2)$ 
and the induced action by $\SL$ gives an $n$-dimensional irreducible representation of $\SL$. In fact, 
$\text{Sym}^{n-1}(\BC^2)$ can be identified with the vector space $V_n$ of homogeneous polynomials on $\BC^2$ with degree $n-1$, i.e. 
$$V_n = \text{span}_{\BC} \la z_1^{n-1}, z_1^{n-2} z_2, \cdots, z_1 z_2^{n-2}, z_2^{n-1}\ra.$$
The action of $P \in \SL$ on $V_n$ is expressed as
$$P \cdot p(z_1, z_2) := p (P^{-1} \begin{pmatrix}
z_1\\
z_2
\end{pmatrix}).$$
This action defines a representation $\sigma_n: \SL \to \mathrm{GL}(V_n)$. 

It is known that the image of $\sigma_n$ is actually contained in $\mathrm{SL}(n,\BC)$, 
and every $n$-dimensional irreducible representation of $\SL$ is equivalent to $(V_n, \sigma_n)$. 
For a representation $\rho: G(L) \to \SL$, 
we denote the composition $\sigma_n \circ \rho: G(L) \to \mathrm{SL}(n,\BC)$ 
by $\rho_n$. 
Note that $\rho_2 = \rho$ holds.

We now study the twisted Alexander polynomial $\Delta_{L,\rho_n}(t_1, \ldots, t_\mu)$ 
for the torus link $L=T(\mu p,\mu q)$. Let $\rho: G(L) \to \SL$ be an irreducible representation. 
Assume that  
$X$ is conjugate to 
$\begin{pmatrix}
\alpha & 0\\
0 & \alpha^{-1}
\end{pmatrix}$ 
and $Y$ is to 
$
\begin{pmatrix}
\beta & 0\\
0 & \beta^{-1}
\end{pmatrix}
$ where 
$\alpha=\exp(\sqrt{-1}\pi a/p),\,0\leq a\leq p$ and $\beta=\exp(\sqrt{-1}\pi b/q),\,0\leq b\leq q$ 
as before. 
We also put $t=t_1\cdots t_\mu$. 

\begin{theorem} \label{main}
If $n$ is even, then 
$$\Delta_{L,\rho_n} = \frac{\big(t^{pq} - (-1)^a\big)^{n\mu}} { \prod_{j=0}^{\frac{n}{2}-1} \left( t^{2q} - 2 (\cos \frac{(2j+1)a \pi}{p}) t^q + 1 \right) \left( t^{2p} - 2 (\cos \frac{(2j+1)b \pi}{q}) t^p + 1  \right)}.$$

If $n$ is odd, then 
$$\Delta_{L,\rho_n} = \frac{(t^{pq} - 1)^{n\mu}} {(t^p - 1)(t^q-1) \prod_{j=1}^{\frac{n-1}{2}} \left( t^{2q} - 2 (\cos \frac{2ja \pi}{p}) t^q + 1 \right) \left( t^{2p} - 2 (\cos \frac{2jb \pi}{q}) t^p + 1  \right)}.$$
\end{theorem}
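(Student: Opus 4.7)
The plan is to mirror the proof of Theorem~\ref{theorem:main}, the new ingredient being the diagonalization of $\sigma_n(X)$ and $\sigma_n(Y)$ on the symmetric power $V_n$. I would retain the presentation and the Fox derivatives from Section~\ref{section:4}: the matrix $A_{\mu+1}$ obtained by removing the column for $y$ has exactly the same combinatorial shape, only now each entry is an $n\times n$ matrix and we compute an $n\mu\times n\mu$ determinant. Conjugating so that $X$ is diagonalized (the twisted Alexander polynomial is conjugation-invariant), I may write $\sigma_n(X)$ in the standard basis $z_1^{n-1-k}z_2^k$ of $V_n$ as the diagonal matrix with entries $\alpha^{2k-(n-1)}$; since $\det\Phi(y-1)$ is a conjugation invariant, it can likewise be computed from the eigenvalues $\beta^{2k-(n-1)}$ of $\sigma_n(Y)$. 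Because $\alpha^p=(-1)^a$, the central element $x^p$ is sent under $\rho_n$ to the scalar $c\,I_n$ with $c:=(-1)^{a(n-1)}$.

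With this setup, the block structure of $A_{\mu+1}$ is again block upper-triangular: the first $\mu-1$ diagonal blocks equal $\Phi(x^p)-I_n=(ct^{pq}-1)I_n$, and the last diagonal block is $E:=\Phi(1+x+\cdots+x^{p-1})$. The off-diagonal blocks involve $\Phi(m_i)$ but are irrelevant to the determinant, so $\det A_{\mu+1}=(ct^{pq}-1)^{n(\mu-1)}\det E$. Using the diagonality of $\sigma_n(X)$ together with $\sigma_n(X)^p=cI_n$, each diagonal entry of $E$ is a geometric sum
$$
\sum_{j=0}^{p-1}(t^q\alpha^{2k-(n-1)})^j=\frac{ct^{pq}-1}{t^q\alpha^{2k-(n-1)}-1},
$$
and similarly $\det\Phi(y-1)=\prod_{k=0}^{n-1}(t^p\beta^{2k-(n-1)}-1)$. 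Dividing yields the unified expression
$$
\D_{L,\rho_n}=\frac{(ct^{pq}-1)^{n\mu}}{\prod_{k=0}^{n-1}(t^q\alpha^{2k-(n-1)}-1)(t^p\beta^{2k-(n-1)}-1)}.
$$

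The remaining step is to simplify by parity of $n$. For $n$ even, the exponents $2k-(n-1)$ ($k=0,\ldots,n-1$) split into conjugate pairs $\pm(2j+1)$ for $j=0,\ldots,n/2-1$; each pair multiplies out to $t^{2q}-2\cos((2j+1)a\pi/p)\,t^q+1$, while $c=(-1)^a$ combines with $((-1)^a)^{n\mu}=1$ to rewrite $(ct^{pq}-1)^{n\mu}$ as $(t^{pq}-(-1)^a)^{n\mu}$. For $n$ odd, the exponents split into the singleton $0$ (contributing $t^q-1$) together with conjugate pairs $\pm 2j$ for $j=1,\ldots,(n-1)/2$, and $c=1$ directly. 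Applying the identical pairing to the $\beta$-factors reproduces the two stated formulas of Theorem~\ref{main}.

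The principal obstacle is pure parity bookkeeping --- tracking the scalar $c=(-1)^{a(n-1)}$ together with the sign $((-1)^a)^{n\mu}$, and verifying that the pairing of the exponents $\pm(2j+1)$ (resp.\ $\pm 2j$) exactly reproduces the cosines appearing in the stated denominator. Otherwise the whole argument is a direct lift of the proof of Theorem~\ref{theorem:main}.
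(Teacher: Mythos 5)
Your proposal is correct and follows essentially the same route as the paper: diagonalize $\rho_n(x)$ and $\rho_n(y)$ with eigenvalues $\alpha^{2k-(n-1)}$, $\beta^{2k-(n-1)}$, reduce $\det A_{\mu+1}$ via the block-triangular structure to $(ct^{pq}-1)^{n(\mu-1)}\det E$ with $E$ a product of geometric sums, and then pair conjugate eigenvalues according to the parity of $n$. The only cosmetic difference is that you first write a single unified formula before splitting into the even/odd cases, whereas the paper treats the two parities separately from the start; the bookkeeping of $c=(-1)^{a(n-1)}$ and of the sign $((-1)^a)^{n\mu}$ is handled correctly.
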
 

We give a proof of the theorem in the next subsection. 
As an immediate corollary, we have the following. 

\begin{corollary}
Every coefficient of $\Delta_{L,\rho_n}(t_1,\ldots,t_\mu)$ is locally constant 
on the irreducible character variety $\mathcal{X}(\mu p,\mu q)$. 
\end{corollary}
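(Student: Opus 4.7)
The plan is to generalize the computation of Section~\ref{section:4} to $\rho_n = \sigma_n \circ \rho$. Starting from the same reduced presentation of $G(L)$ with relators $r_i = [x^p,m_i]$ for $1 \le i \le \mu-1$ and $r_\mu = x^p y^{-q}$, the free derivative computations from Section~\ref{section:4} apply verbatim after composing with $\sigma_n$. The key observation is that after applying $\Phi_n$ and deleting the $y$-column, the resulting square matrix $A_{\mu+1}$ is block upper triangular: its first $\mu-1$ diagonal blocks all equal $\Phi_n(x^p - 1) = t^{pq}\sigma_n(X)^p - I$, and its last diagonal block equals $\Phi_n(1 + x + \cdots + x^{p-1}) = \sum_{k=0}^{p-1} t^{qk}\sigma_n(X)^k$. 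Consequently
$$
\Delta_{L,\rho_n}(t_1,\ldots,t_\mu) = \frac{\bigl(\det(t^{pq}\sigma_n(X)^p - I)\bigr)^{\mu-1}\, \det\!\left(\sum_{k=0}^{p-1} t^{qk}\sigma_n(X)^k\right)}{\det(t^p\sigma_n(Y) - I)}.
$$

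Next I would diagonalize. By Lemma~\ref{lem:center} and irreducibility, $X$ and $Y$ are diagonalizable with eigenvalues $\alpha^{\pm 1}$ and $\beta^{\pm 1}$ respectively. A direct computation with the weight basis $\{z_1^{n-1-k} z_2^k\}$ of $\mathrm{Sym}^{n-1}(\BC^2)$ shows that $\sigma_n(X)$ and $\sigma_n(Y)$ then have spectra $\{\alpha^{n-1-2j}\}_{j=0}^{n-1}$ and $\{\beta^{n-1-2j}\}_{j=0}^{n-1}$. Each of the three determinants above becomes a product over these eigenvalues, and the middle one is evaluated using the geometric sum $\sum_{k=0}^{p-1}(\lambda t^q)^k = (\lambda^p t^{pq}-1)/(\lambda t^q - 1)$.

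The result follows from a parity case split. For $n$ even, each weight $n-1-2j$ is odd, so $(\alpha^{n-1-2j})^p = \alpha^p = (-1)^a$ for every $j$; the $n(\mu-1)$ copies of $((-1)^a t^{pq}-1)$ from the first factor combine with the $n$ copies from the numerators of the geometric sums to give $(t^{pq}-(-1)^a)^{n\mu}$ (the overall sign is trivial since $n\mu$ is even). Pairing the eigenvalues $\alpha^{n-1-2j}$ with $\alpha^{-(n-1-2j)}$ over the odd exponents $\{1,3,\ldots,n-1\}$ turns the remaining denominator into $\prod_{j=0}^{n/2-1}(t^{2q} - 2\cos\frac{(2j+1)a\pi}{p}t^q + 1)$, and likewise for $\beta$ in $\det(t^p\sigma_n(Y)-I)$. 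For $n$ odd, each weight is even, so $(\alpha^{n-1-2j})^p = 1$ uniformly, giving $(t^{pq}-1)^{n\mu}$ in the numerator. There is now an unpaired eigenvalue $\lambda=1$ contributing an extra $(t^q-1)$ from $\sigma_n(X)$ and $(t^p-1)$ from $\sigma_n(Y)$, while the remaining eigenvalues pair as $\alpha^{\pm 2j}$ and $\beta^{\pm 2j}$ for $j=1,\ldots,(n-1)/2$, producing the cosine factors of the second formula.

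The main technical issue is the bookkeeping of eigenvalues: pairing them correctly, isolating the exceptional unpaired eigenvalue $\lambda=1$ that appears only when $n$ is odd, and checking that the emerging sign factors cancel (in the even case because $n\mu$ is even, in the odd case because $\sigma_n(X)^p$ acts as the identity on all weight spaces). Once this is set up, the calculation is a direct generalization of the $n=2$ computation in Section~\ref{section:4}.
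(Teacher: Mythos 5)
Your proposal is correct and takes essentially the same route as the paper: the authors likewise reduce to the block-triangular Alexander matrix to get $\Delta_{L,\rho_n}=\bigl(\det(t^{pq}\rho_n(x)^p-I)\bigr)^{\mu-1}\det\bigl(\sum_{k=0}^{p-1}t^{qk}\rho_n(x)^k\bigr)/\det(t^p\rho_n(y)-I)$, diagonalize using the weights $\alpha^{n-1-2j}$, $\beta^{n-1-2j}$, and split by the parity of $n$ exactly as you describe. The local constancy then follows, as in the $n=2$ case, because the resulting formula depends only on the discrete data $(a,b)$, i.e.\ on the eigenvalues of $X$ and $Y$, which cannot vary continuously.
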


For a link $L$ in $S^3$, by \cite{Ki}, we have $\BT_{L,\rho_n} = \Delta_{L,\rho_n} (1, \ldots, 1)$, 
where $\BT_{L,\rho_n}$ is the Reidemeister torsion (see \cite{Milnor66-1} for the definition). 
Hence, for the torus link $L=T(\mu p,\mu q)$, 
we obtain the following.

\begin{corollary}
Suppose $n$ is even and $a \equiv b \equiv 1 \pmod{2}$. Then  
$$\BT_{L,\rho_n}=\frac{2^{n(\mu-2)}} { \prod_{j=0}^{\frac{n}{2}-1} \sin^2 \left(\frac{(2j+1)a \pi}{2p}\right) \sin^2 \left(\frac{(2j+1)b \pi}{2q}\right)}.$$
\end{corollary}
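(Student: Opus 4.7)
The plan is to evaluate the even-$n$ formula from Theorem~\ref{main} at $t_1=\cdots=t_\mu=1$ (so $t=1$), invoking the identity $\BT_{L,\rho_n}=\Delta_{L,\rho_n}(1,\ldots,1)$ from \cite{Ki}, and then simplify the resulting trigonometric expression with the half-angle identity $2-2\cos\theta=4\sin^2(\theta/2)$.

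First I would treat the numerator. Since $a$ is odd by hypothesis, $(-1)^a=-1$, so at $t=1$ the factor $(t^{pq}-(-1)^a)^{n\mu}$ becomes $(1-(-1))^{n\mu}=2^{n\mu}$. Next I would reduce each denominator factor via the half-angle identity: the factor $t^{2q}-2\cos\bigl(\tfrac{(2j+1)a\pi}{p}\bigr)t^q+1$ specializes at $t=1$ to $2-2\cos\bigl(\tfrac{(2j+1)a\pi}{p}\bigr)=4\sin^2\bigl(\tfrac{(2j+1)a\pi}{2p}\bigr)$, and the analogous $b$-factor specializes to $4\sin^2\bigl(\tfrac{(2j+1)b\pi}{2q}\bigr)$. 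Since $j$ ranges over $0\le j\le n/2-1$, the constant factors contribute $4^{n/2}\cdot 4^{n/2}=2^{2n}$. Dividing gives $\BT_{L,\rho_n}=2^{n\mu}/2^{2n}=2^{n(\mu-2)}$ over the product of $\sin^2$ terms, which is exactly the claimed formula.

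The only point requiring separate verification is that the substitution $t=1$ is legitimate, i.e.\ that no denominator factor vanishes. This amounts to checking $\sin\bigl(\tfrac{(2j+1)a\pi}{2p}\bigr)\ne 0$ (and the analogue for $b,q$) for every $0\le j\le n/2-1$. But $a$ is odd and $2j+1$ is odd, so $(2j+1)a$ is odd and therefore not divisible by $2p$; consequently $(2j+1)a/(2p)\notin\Z$ and the sine is nonzero. This is the only step beyond purely algebraic substitution, and I expect it to be the main (in fact the only) obstacle; the rest of the argument is a direct evaluation that produces the formula in a few lines.
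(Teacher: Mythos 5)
Your proof is correct and takes essentially the same route the paper intends: the corollary is presented as an immediate consequence of the identity $\BT_{L,\rho_n}=\Delta_{L,\rho_n}(1,\ldots,1)$ and the even-$n$ formula of Theorem~\ref{main}, obtained by exactly the substitution $t=1$ and the half-angle simplification $2-2\cos\theta=4\sin^2(\theta/2)$ that you carry out, with the constants combining to $2^{n\mu}/2^{2n}=2^{n(\mu-2)}$. Your explicit verification that no denominator factor vanishes at $t=1$ (since $(2j+1)a$ and $(2j+1)b$ are odd and hence not divisible by $2p$ or $2q$) is a detail the paper leaves implicit, and it is the right point to check.
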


\begin{remark} For the torus knot $T(p,q)$ (i.e. $\mu=1$), the above formula give 
the same one as in \cite[Proposition~4.1]{Ya13}. 
Moreover, by a similar argument to \cite[Proposition~3.8]{Ya17}, one can show that
$$\lim_{n \to \infty} \frac{\log \BT_{L,\rho_n}}{n} = \left( \mu - \frac{1}{p'} - \frac{1}{q'} \right) \log 2,$$
where $p' = \frac{p}{(a,p)}$ and $q' = \frac{q}{(b,q)}$.
\end{remark}

\subsection{Proof of Theorem \ref{main}}\label{subsection:proof}
Recall 
$ \rho(x) = \begin{pmatrix}\alpha&0\\0&\alpha^{-1}\end{pmatrix}$ 
and $\rho(y)$ is conjugate to $\begin{pmatrix}\beta&0\\0&\beta^{-1}\end{pmatrix}$, 
where 
$\alpha=\exp(\sqrt{-1}\pi a/p),\,0\leq a\leq p$ and $\beta=\exp(\sqrt{-1}\pi b/q),\,0\leq b\leq q$. 
Then it is easy to check that
$\rho_n(x) =\text{diag}(\alpha^{n-1}, \alpha^{n-3}, \ldots, \alpha^{-(n-1)})$ and $\rho_n(y)$ is conjugate to $\text{diag}(\beta^{n-1}, \beta^{n-3}, \ldots, \beta^{-(n-1)})$, 
which are diagonal matrices of degree $n$. Hence
$$
\Delta_{L,\rho_n} =\big( \det (t^{pq} X^p - I) \big)^{\mu-1} \, \frac{
\det (I+t^q X+\cdots+t^{q(p-1)}X^{p-1})
}
{\det(t^p Y-I)
}.$$

If $n$ is even, then
$$\Delta_{L,\rho_n} = \prod_{j=-\frac{n}{2}}^{\frac{n}{2}-1} (\alpha^{(2j+1)p} t^{pq} -1 )^{\mu-1} \frac{1+\alpha^{2j+1} t^q + \cdots + \alpha^{(2j+1)(p-1)} t^{q(p-1)}}{t^{p}-\beta^{2j+1}}.$$
Since $\alpha^p=(-1)^a$, we have $$1+\alpha^{2j+1} t^q + \cdots + \alpha^{(2j+1)(p-1)} t^{(p-1)q} = \frac{\alpha^{(2j+1)p} t^{pq} - 1}{\alpha^{2j+1} t^q - 1} = \frac{(-1)^a t^{pq} - 1}{\alpha^{2j+1} t^q - 1}.$$
Hence 
\begin{eqnarray*} 
\Delta_{L,\rho_n} &=& \prod_{j=-\frac{n}{2}}^{\frac{n}{2}-1} \frac{\big( (-1)^a t^{pq} - 1 \big)^\mu}{\alpha^{2j+1} t^q - 1} \cdot \frac{1}{t^{p}-\beta^{2j+1}}\\
&=& \prod_{j=0}^{\frac{n}{2}-1} \frac{\big( (-1)^a t^{pq} - 1 \big)^\mu}{\alpha^{2j+1} t^q - 1} \cdot \frac{\big( (-1)^a t^{pq} - 1 \big)^\mu}{\alpha^{-(2j+1)} t^q - 1} \cdot \frac{1}{t^{p}-\beta^{2j+1}} \cdot \frac{1}{t^{p}-\beta^{-(2j+1)}}\\
&=& \prod_{j=0}^{\frac{n}{2}-1} \frac{ (t^{pq} - (-1)^a)^{2\mu}}{t^{2q} - (\alpha^{2j+1} + \alpha^{-(2j+1)}) t^q + 1} \cdot \frac{1}{t^{2p} - (\beta^{2j+1} + \beta^{-(2j+1)}) t^p + 1}\\
&=& \prod_{j=0}^{\frac{n}{2}-1} \frac{ (t^{pq} - (-1)^a)^{2\mu}}{t^{2q} - 2 (\cos \frac{(2j+1)a \pi}{p}) t^q + 1} \cdot \frac{1}{t^{2p} - 2 (\cos \frac{(2j+1)b \pi}{q}) t^p + 1}\\
&=& \frac{(t^{pq} - (-1)^a)^{n\mu}} { \prod_{j=0}^{\frac{n}{2}-1} \left( t^{2q} - 2 (\cos \frac{(2j+1)a \pi}{p}) t^q + 1 \right) \left( t^{2p} - 2 (\cos \frac{(2j+1)b \pi}{q}) t^p + 1  \right)}. 
\end{eqnarray*} 

Similarly, if $n$ is odd, then 
\begin{eqnarray*}
\Delta_{L,\rho_n}
&=& \prod_{j=-\frac{n-1}{2}}^{\frac{n-1}{2}} (\alpha^{2jp} t^{pq} -1 )^{n-1} \frac{1+\alpha^{2j} t^q + \cdots + \alpha^{2j(p-1)} t^{q(p-1)}}{t^{p}-\beta^{2j}} \\
&=& \prod_{j=-\frac{n-1}{2}}^{\frac{n-1}{2}} \frac{(t^{pq} - 1)^\mu}{\alpha^{2j} t^q - 1} \cdot \frac{1}{t^{p}-\beta^{2j}} \\
&=& \frac{(t^{pq} - 1)^n}{(t^p - 1)(t^q-1)} \prod_{j=1}^{\frac{n-1}{2}} \frac{(t^{pq} - 1)^{2\mu}}{(\alpha^{2j} t^q - 1)(\alpha^{-2j} t^q - 1)} \cdot \frac{1}{(t^{p}-\beta^{2j})(t^{p}-\beta^{-2j})} \\
&=& \frac{(t^{pq} - 1)^{n\mu}} {(t^p - 1)(t^q-1) \prod_{j=1}^{\frac{n-1}{2}} \left( t^{2q} - 2 (\cos \frac{2ja \pi}{p}) t^q + 1 \right) \left( t^{2p} - 2 (\cos \frac{2jb \pi}{q}) t^p + 1  \right)}. 
\end{eqnarray*} 

This completes the proof of Theorem~\ref{main}.

\subsection*{Acknowledgements} 
The first and second authors have been partially supported by JSPS KAKENHI Grant Numbers 
16K05161 and 17K05261 respectively. 
The third author has been partially supported by a grant from the Simons Foundation (\#354595 to AT).


\end{document}